\documentclass[11pt,a4paper]{article}

\usepackage{inputenc}
\usepackage{amsmath}
\usepackage{bm}
\usepackage{bbold}
\usepackage{amsthm}
\usepackage{enumerate}

\usepackage{hyperref}

\setlength{\mathsurround}{1pt}

\title{A Maximization Problem in Tropical Mathematics: A Complete Solution and Application Examples}

\author{Nikolai Krivulin\thanks{Faculty of Mathematics and Mechanics, St.~Petersburg State University, 28 Universitetsky Ave., St.~Petersburg, 198504, Russia, 
nkk@math.spbu.ru.}
}

\date{}

\newtheorem{theorem}{Theorem}
\newtheorem{lemma}[theorem]{Lemma}
\newtheorem{corollary}[theorem]{Corollary}

\setlength{\unitlength}{1mm}


\begin{document}

\maketitle

\begin{abstract}
A multidimensional optimization problem is formulated in the tropical mathematics setting as to maximize a nonlinear objective function, which is defined through a multiplicative conjugate transposition operator on vectors in a finite-dimensional semimodule over a general idempotent semifield. The study is motivated by problems drawn from project scheduling, where the deviation between initiation or completion times of activities in a project is to be maximized subject to various precedence constraints among the activities. To solve the unconstrained problem, we first establish an upper bound for the objective function, and then solve a system of vector equations to find all vectors that yield the bound. As a corollary, an extension of the solution to handle constrained problems is discussed. The results obtained are applied to give complete direct solutions to the motivating problems from project scheduling. Numerical examples of the development of optimal schedules are also presented.
\\

\textbf{Key-Words:} tropical mathematics, idempotent semifield, optimization problem, nonlinear objective function, project scheduling.
\\

\textbf{MSC (2010):} 65K10, 15A80, 90C48, 90B35
\end{abstract}

\section{Introduction}

Optimization problems that are formulated and solved in the framework of tropical mathematics offer an evolving research domain in applied mathematics with an expanding application scope. Tropical (idempotent) mathematics deals with semirings with idempotent addition and dates back to pioneering works \cite{Pandit1961Anew,CuninghameGreen1962Describing,Hoffman1963Onabstract,Giffler1963Scheduling,Vorobjev1963Theextremal,Romanovskii1964Asymptotic}, which were inspired by real-world problems in operations research, including optimization problems. 

The tropical optimization problems under consideration are set up in the tropical mathematics setting to minimize or maximize linear and nonlinear functions defined on finite-dimensional semimodules over idempotent semifields, subject to linear inequality and equality constraints. The linear objective functions turn the problems into formal idempotent analogues of ordinary linear programming problems. The nonlinear objective functions are assumed to be defined through a multiplicative conjugate transposition operator. 

There is a range of solution approaches offered to handle particular problems in a set of works, which include \cite{Hoffman1963Onabstract,Cuninghamegreen1976Projections,Superville1978Various,Zimmermann1984Some,Butkovic2009Introduction,Gaubert2012Tropical}. Among them are iterative algorithms that produce a solution if any, or indicate that no solution exists otherwise \cite{Zimmermann1984Some,Zimmermann2006Interval,Butkovic2009Introduction,Gaubert2012Tropical}, and exact methods that provide direct solutions in a closed form \cite{Hoffman1963Onabstract,Cuninghamegreen1976Projections,Superville1978Various,Zimmermann2003Disjunctive,Zimmermann2006Interval}. Many problems are represented and worked out in terms of particular idempotent semifields as those in \cite{Superville1978Various,Zimmermann1984Some,Butkovic2009Introduction,Gaubert2012Tropical}, whereas some other problems are examined in a general setting, which covers the above semifields as special cases \cite{Hoffman1963Onabstract,Cuninghamegreen1976Projections,Zimmermann2006Interval}. Existing methods, however, mainly give a particular solution, rather than all solutions to the problem under study.

As the problems can appear in a variety of applied contexts, a large body of motivation and application examples is drawn from optimal scheduling \cite{Cuninghamegreen1976Projections,Zimmermann1984Some,Zimmermann2006Interval,Butkovic2009Onsome,Tam2010Optimizing}. Specifically, the examples include scheduling problems, where the objective function takes the form of the span (range) seminorm.

The span seminorm is defined, in the ordinary setting, as the maximum deviation between components of a vector. It finds application as an optimality criterion in diverse areas from the analysis of Markov decision processes \cite{Bather1973Optimal,Puterman2005Markov} to the form-error measurement in precision metrology \cite{Murthy1980Minimum,Gosavi2012Form}. 

In the context of tropical mathematics, the span seminorm is introduced by \cite{Cuninghamegreen1979Minimax,CuninghameGreen2004Bases}, where it is called the range seminorm. Both problems of minimizing and maximizing the seminorm taken from machine scheduling are examined in \cite{Butkovic2009Onsome,Tam2010Optimizing} with a combined technique, which needs to use two reciprocally dual idempotent semifields.

Another more straightforward approach is implemented in \cite{Krivulin2013Explicit} to solve problems of minimizing the span seminorm, where the seminorm is represented as a nonlinear objective function defined through a conjugate transposition operator. The problem arises in project management within the framework of just-in-time scheduling of activities constrained by various precedence relations (see, e.g., \cite{Tkindt2006Multicriteria,Demeulemeester2002Project} for further details and references on project scheduling). Based on the approach, a direct exact solution to the problems is obtained in a compact vector form given in terms of a single semiring.

In this paper, we start with the same problems as above, except that the span seminorm is maximized. In the context of optimal scheduling, the problems appear when activity initiation or completion times are to be spread over the maximum possible time interval due to the lack of resource to handle all activities simultaneously. One of the problems, which is to maximize the completion time deviation of activities, is similar to that considered in \cite{Butkovic2009Onsome,Tam2010Optimizing}.

We formulate a common tropical optimization problem as to maximize a nonlinear objective function defined on vectors over a general idempotent semifield. To solve the problem in terms of the carrier semiring, we first establish an upper bound for the objective function, and then find all vectors that yield the bound. As particular cases, complete direct solutions are given to the problems of maximizing the span seminorm in project scheduling.

The rest of the paper is organized as follows. Section~\ref{S-Me} suggests motivating problems coming from project scheduling. In Section~\ref{S-Pdr}, we give an overview of preliminary definitions and results of idempotent algebra, including complete solutions to linear vector equations. The main result, which offers a complete direct solution to a general maximization problem, and its corollaries are given in Section~\ref{S-Op}. Finally, we present applications of the results obtained to solve scheduling problems together with numerical examples in Section~\ref{S-Aps}.

\section{Motivating examples}\label{S-Me}

In this section, we describe problems drawn from the project scheduling \cite{Tkindt2006Multicriteria,Demeulemeester2002Project} and intended to both motivate and illustrate the development of solutions to tropical optimization problems presented below. The scheduling problems are formulated in the general terms of activities and precedence relations, which can represent actual jobs, tasks or operations and time constraints placed on them by technical, operational, or other real-world limitations.

Suppose there is a project that involves certain activities operating under various temporal constraints. The constraints have the form of start-finish and start-start precedence relations defined for each pair of activities. The start-finish relation limits a minimum allowed time lag between initiation of one activity and completion of the other, whereas the start-start relation fixes a minimum lag between initiations of the activities. Each activity is assumed to complete at the earliest possible time within the constraints imposed.

Scheduling problems of interest are to determine, subject to the constraints, an appropriate initiation time for each activity so as to satisfy an optimality criterion in the form of maximum deviation time between either initiation or completion times of the activities.

Consider a project of $n$ activities. For each activity $i=1,\ldots,n$, denote the initiation time by $x_{i}$ and the completion time by $y_{i}$. Let $a_{ij}$ be a given time lag between initiation of activity $j=1,\ldots,n$ and completion of $i$. The start-finish constraints are represented in the ordinary notation by the equalities
$$
\max_{1\leq j\leq n}(x_{j}+a_{ij})
=
y_{i},
\qquad
i=1,\ldots,n.
$$

With the maximum deviation of completion time of activities given by
$$
\max_{1\leq i\leq n}y_{i}
-
\min_{1\leq i\leq n}y_{i}
=
\max_{1\leq i\leq n}y_{i}
+
\max_{1\leq i\leq n}(-y_{i}),
$$
we arrive at a problem of finding for each $i=1,\ldots,n$ the unknown $x_{i}$ that
\begin{equation}
\begin{aligned}
&
\text{maximize}
&&
\max_{1\leq i\leq n}y_{i}
+
\max_{1\leq i\leq n}(-y_{i}),
\\
&
\text{subject to}
&&
\max_{1\leq j\leq n}(x_{j}+a_{ij})
=
y_{i},
\quad
i=1,\ldots,n.
\end{aligned}
\label{P-maxyimaxyixjaijyi}
\end{equation}

Note that a similar problem arising in machine scheduling is examined in \cite{Butkovic2009Onsome,Tam2010Optimizing} in the context of the analysis of the image set of a max-linear mapping.

Furthermore, given $c_{ij}$ to be a time lag between initiation of activity $j$ and initiation of $i$, the start-start constraints yield the inequalities
$$
\max_{1\leq j\leq n}(x_{j}+c_{ij})
\leq
x_{i},
\qquad
i=1,\ldots,n.
$$

If, for some $i$ and $j$, there is actually no time lag defined, we put $c_{ij}=-\infty$.

The problem of maximizing the deviation between initiation times of activities takes the form
\begin{equation}
\begin{aligned}
&
\text{maximize}
&&
\max_{1\leq i\leq n}x_{i}
+
\max_{1\leq i\leq n}(-x_{i}),
\\
&
\text{subject to}
&&
\max_{1\leq j\leq n}(x_{j}+c_{ij})
\leq
x_{i},
\quad
i=1,\ldots,n.
\end{aligned}
\label{P-maxximaxxixjcijxi}
\end{equation}

Finally, when both start-finish and start-start constraints are taken into account, we get a problem to find an initiation time for each activity to
\begin{equation}
\begin{aligned}
&
\text{maximize}
&&
\max_{1\leq i\leq n}y_{i}
+
\max_{1\leq i\leq n}(-y_{i}),
\\
&
\text{subject to}
&&
\max_{1\leq j\leq n}(x_{j}+a_{ij})
=
y_{i},
\\
&
&&
\max_{1\leq j\leq n}(x_{j}+c_{ij})
\leq
x_{i},
\quad
i=1,\ldots,n.
\end{aligned}
\label{P-maxyimaxyixjcijxixjaijyi}
\end{equation}

Below the scheduling problems considered are represented in terms of tropical mathematics. We offer a complete solution to a general tropical optimization problem, and then solve the scheduling problems as particular cases.

\section{Preliminary definitions and results}\label{S-Pdr}

The purpose of this section is to give a brief overview of basic definitions and preliminary results that underlie formulation and solution of tropical optimization problems under study. In the literature, there is a range of works that provide concise introduction to as well as comprehensive coverage of the theory and methods of tropical mathematics in various forms and somewhat different formal languages, including recent publications \cite{Kolokoltsov1997Idempotent,Golan2003Semirings,Heidergott2006Maxplus,Litvinov2007Themaslov,Gondran2008Graphs,Butkovic2010Maxlinear}. 

The overview offered below is mainly based on the presentation style of notation and results in \cite{Krivulin2006Solution,Krivulin2009Methods}, which offers the possibility of getting direct complete solutions in a compact vector form. For additional details and further discussion, one can consult references listed before.

\subsection{Idempotent semifield}

Let $\mathbb{X}$ be a set that is closed with respect to addition $\oplus$ and multiplication $\otimes$, which are both associative and commutative binary operations, where multiplication is distributive over addition. The set includes zero $\mathbb{0}$ and unit $\mathbb{1}$ to be respective neutral elements for addition and multiplication. Addition is assumed to be idempotent, which implies that $x\oplus x=x$ for all $x\in\mathbb{X}$. Multiplication is invertible to provide each $x\in\mathbb{X}\setminus\{\mathbb{0}\}$ with an inverse $x^{-1}$ such that $x^{-1}\otimes x=\mathbb{1}$. Under these assumptions, the algebraic structure $\langle\mathbb{X},\mathbb{0},\mathbb{1},\oplus,\otimes\rangle$ is commonly referred to as the idempotent semifield over $\mathbb{X}$.

Idempotent addition imposes a partial order on the semifield, which establishes a relation $x\leq y$ if and only if $x\oplus y=y$. The definition implies that addition has an extremal property, which ensures the inequalities $x\leq x\oplus y$ and $y\leq x\oplus y$ for all $x,y\in\mathbb{X}$, as well as that both addition and multiplication are isotone in each argument. Finally, it is assumed that the partial order can be completed into a total order, which makes the semifield linearly ordered. 

In what follows, we routinely omit the multiplication sign for the brevity sake. The relation symbols and the $\max$ operator are thought of as defined in terms of the order induced by idempotent addition.

The semifield $\mathbb{R}_{\max,+}=\langle\mathbb{R}\cup\{-\infty\},-\infty,0,\max,+\rangle$ over the set of real numbers $\mathbb{R}$ offers an example of idempotent semifield under study, which is used to represent and solve optimal scheduling problems below.

\subsection{Matrix algebra}

Matrices and vectors with entries in $\mathbb{X}$ are routinely defined together with related operations, which are performed according to the conventional rules with the operations $\oplus$ and $\otimes$ in the role of ordinary addition and multiplication.

As usual, the set of matrices over $\mathbb{X}$ with $m$ rows and $n$ columns is denoted by $\mathbb{X}^{m\times n}$. A matrix with all entries equal to $\mathbb{0}$ is the zero matrix. A matrix is row (column) regular if it has no rows (columns) that consist entirely of $\mathbb{0}$.

In what follows, we denote matrices with bold uppercase letters. For each introduced matrix, the same bold lowercase and normal lowercase letters are reserved respectively for the columns and entries of the matrix. Specifically, a column and an entry of a matrix $\bm{A}$ are denoted by $\bm{a}_{i}$ and $a_{ij}$.

The extremal property of scalar addition extends to matrix addition in the form of entry-wise inequalities $\bm{A}\leq\bm{A}\oplus\bm{B}$ and $\bm{B}\leq\bm{A}\oplus\bm{B}$, which are valid for all $\bm{A},\bm{B}\in\mathbb{X}^{m\times n}$. Addition and multiplication of matrices, as well as multiplication of matrices by scalars, are isotone in each argument.

For any matrix $\bm{A}=(a_{ij})\in\mathbb{X}^{m\times n}$ with regular columns, there defined a multiplicative conjugate transpose as a matrix $\bm{A}^{-}=(a_{ij}^{-})\in\mathbb{X}^{n\times m}$ with entries $a_{ij}^{-}=a_{ji}^{-1}$. For two conforming matrices $\bm{A}$ and $\bm{B}$ without zero entries, the entry-wise inequality $\bm{A}\leq\bm{B}$ implies the inequality $\bm{A}^{-}\geq\bm{B}^{-}$.

Consider square matrices in $\mathbb{X}^{n\times n}$. A square matrix that has $\mathbb{1}$ on the diagonal and $\mathbb{0}$ elsewhere, is the identity matrix denoted by $\bm{I}$. The power notation with nonnegative integer exponents is used to represent repeated multiplication by the same matrix with $\bm{A}^{0}=\bm{I}$ for any $\bm{A}\in\mathbb{X}^{n\times n}$.

For any matrix $\bm{A}=(a_{ij})\in\mathbb{X}^{n\times n}$, the trace is given by
$$
\mathop\mathrm{tr}\bm{A}
=
\bigoplus_{i=1}^{n}a_{ii}.
$$

A matrix is reducible if it can be put in a block-triangular form with zero blocks above (or below) the diagonal by simultaneous permutation of rows and columns. Otherwise, the matrix is considered to be irreducible. Any matrix with regular columns (rows) has only nonzero entries, and so irreducible. 

It is not difficult to see that, for any irreducible matrix $\bm{A}\in\mathbb{X}^{n\times n}$, the matrix $\bm{I}\oplus\bm{A}\oplus\cdots\oplus\bm{A}^{n-1}$ has no zero entries.

Any matrix of one column presents a column vectors. The set of column vectors with $n$ components over $\mathbb{X}$ is denoted by $\mathbb{X}^{n}$ and forms a finite-dimensional idempotent semimodule with respect to vector addition and scalar multiplication. A vector with all components equal to $\mathbb{0}$ is the zero vector. A vector is called regular if it has no zero components.


For any regular column vector $\bm{x}=(x_{i})\in\mathbb{X}^{n}$, there defined a multiplicative conjugate transpose $\bm{x}^{-}=(x_{i}^{-})$ as a row vector with components $x_{i}^{-}=x_{i}^{-1}$. It is not difficult to verify that if a vector $\bm{x}$ is regular, then $\bm{x}\bm{x}^{-}\geq\bm{I}$. For any two regular vectors $\bm{x}$ and $\bm{y}$ of the same order, it holds that $(\bm{x}\bm{y}^{-} )^{-}=\bm{y}\bm{x}^{-}$.

To simplify some further formulae, we introduce, for any vector $\bm{x}\in\mathbb{X}^{n}$ and matrix $\bm{A}\in\mathbb{X}^{m\times n}$, idempotent analogues of the vector and matrix norms
$$
\|\bm{x}\|
=
\bigoplus_{i=1}^{n}x_{i},
\qquad
\|\bm{A}\|
=
\bigoplus_{i=1}^{m}\bigoplus_{j=1}^{n}a_{ij}.
$$

Denote by $\mathbb{1}$ a vector with all components equal to $\mathbb{1}$. Now we can write
$$
\|\bm{x}\|
=
\mathbb{1}^{T}\bm{x},
\qquad
\|\bm{A}\|
=
\mathbb{1}^{T}\bm{A}\mathbb{1}.
$$

For any vectors $\bm{x}$ and $\bm{y}$ of the same order, it holds that $\|\bm{x}\bm{y}^{T}\|=\|\bm{x}\|\|\bm{y}\|$.

\subsection{Linear equations and inequalities}

Assume $\bm{A}\in\mathbb{X}^{m\times n}$ to be a given matrix and $\bm{d}\in\mathbb{X}^{n}$ a given vector. Consider a problem to find solutions $\bm{x}\in\mathbb{X}^{n}$ to a general equation
$$
\bm{A}\bm{x}
=
\bm{d}.
$$

A complete direct solution of the problem is given in a vector form in \cite{Krivulin2009Methods,Krivulin2012Asolution}. In what follows, we need a solution to a particular case when $m=1$. Given a vector $\bm{a}\in\mathbb{X}^{n}$ and a scalar $d\in\mathbb{X}$, the problem is to solve an equation
\begin{equation}
\bm{a}^{T}\bm{x}
=
d.
\label{E-axd}
\end{equation}

Based on the solution of the general equation, a solution to \eqref{E-axd} is as follows.
\begin{lemma}
Let $\bm{a}=(a_{i})$ be a regular vector and $d>\mathbb{0}$ a scalar. Then the solutions of equation \eqref{E-axd} form a family of solutions each defined for a particular $i=1,\ldots,n$ as a set of vectors $\bm{x}=(x_{i})$ with components
\begin{align*}
x_{i}
&=
a_{i}^{-1}d,
\\
x_{j}
&\leq
a_{j}^{-1}d,
\quad
j\ne i.
\end{align*}
\label{L-axd}
\end{lemma}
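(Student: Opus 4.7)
The plan is to interpret the scalar equation $\bm{a}^{T}\bm{x}=d$ as the finite idempotent sum $\bigoplus_{j=1}^{n}a_{j}x_{j}=d$, and then exploit the fact that, under the linearly ordered structure of the semifield, an idempotent sum of finitely many elements coincides with the greatest of them. This observation splits the equation into the conjunction of two conditions: (i) $a_{j}x_{j}\leq d$ for every $j$, and (ii) $a_{i}x_{i}=d$ for at least one index $i$. Item (i) is the extremal property of idempotent addition, while (ii) records that a maximum over a finite set is attained.

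For the necessity direction, I would fix an arbitrary solution $\bm{x}$ and apply (i) and (ii). Since $\bm{a}$ is regular, each $a_{j}$ is invertible, and multiplying (i) by $a_{j}^{-1}$ (which preserves order, as multiplication by a nonzero scalar is isotone) yields $x_{j}\leq a_{j}^{-1}d$ for all $j$. Applying the same operation to (ii) gives $x_{i}=a_{i}^{-1}d$ for the corresponding index, so $\bm{x}$ lies in the family described by the lemma for that $i$. The assumption $d>\mathbb{0}$ guarantees that $a_{i}^{-1}d$ is well-defined and nonzero, so the normalization in (ii) is meaningful.

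The converse is a routine check: given any $i$ and a vector with the prescribed components, isotonicity of multiplication gives $a_{j}x_{j}\leq d$ for every $j$ and $a_{i}x_{i}=d$. The idempotent sum then satisfies $d=a_{i}x_{i}\leq\bigoplus_{j=1}^{n}a_{j}x_{j}\leq d$, and so equals $d$, confirming that $\bm{x}$ solves \eqref{E-axd}.

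The only delicate point is justifying, within the general semifield formalism, that a finite idempotent sum actually equals its greatest summand; this follows by a short induction from the totality of the order, which forces $x\oplus y\in\{x,y\}$. A secondary issue is purely expository: one should make clear that the lemma describes a union of $n$ parameterized families, indexed by the choice of the ``active'' coordinate $i$, since distinct choices of $i$ may yield overlapping but genuinely different sets of solutions.
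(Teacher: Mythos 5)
Your argument is correct. Note, though, that the paper does not prove Lemma~\ref{L-axd} at all: it obtains the statement as the case $m=1$ of the complete solution of the general equation $\bm{A}\bm{x}=\bm{d}$ cited from earlier work, so the comparison is between your direct elementary proof and an appeal to a more general external result. Your route is self-contained and uses only the structural facts already stated in Section~\ref{S-Pdr}: by linearity of the order, $x\oplus y\in\{x,y\}$, so the finite sum $\bigoplus_{j}a_{j}x_{j}$ equals its greatest summand; hence the equation is equivalent to $a_{j}x_{j}\leq d$ for all $j$ together with $a_{i}x_{i}=d$ for some $i$, and regularity of $\bm{a}$ plus isotonicity of multiplication by $a_{j}^{-1}$ converts this to exactly the family in the lemma, with the converse being the routine sandwich $d=a_{i}x_{i}\leq\bigoplus_{j}a_{j}x_{j}\leq d$. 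The one point worth making explicit is where the linear-order assumption enters: in a merely partially ordered semifield the sum need not be attained by a single summand, so the ``active index'' $i$ in the necessity direction genuinely relies on the paper's standing assumption that the order is total; you flag this correctly. What the paper's approach buys is uniformity (the lemma drops out of a general machinery for $\bm{A}\bm{x}=\bm{d}$ valid for arbitrary $m$); what yours buys is a short, transparent verification that also makes clear why $d>\mathbb{0}$ and regularity of $\bm{a}$ are needed, and your closing remark that the solution set is a union of $n$ possibly overlapping parameterized families matches the intended reading of the statement.
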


Now we present solutions to another problem to be used below. Given a matrix $\bm{A}\in\mathbb{X}^{n\times n}$, consider a problem of finding regular solutions $\bm{x}\in\mathbb{X}^{n}$ that satisfy an  inequality
\begin{equation}
\bm{A}\bm{x}
\leq
\bm{x}.
\label{I-Axx}
\end{equation}

To describe a solution given in \cite{Krivulin2006Solution,Krivulin2009Methods}, we make some definitions. For each matrix $\bm{A}\in\mathbb{X}^{n\times n}$, a function is introduced that yields a scalar
$$
\mathop\mathrm{Tr}(\bm{A})
=
\mathop\mathrm{tr}\bm{A}\oplus\cdots\oplus\mathop\mathrm{tr}\bm{A}^{n}.
$$

Under the condition that $\mathop\mathrm{Tr}(\bm{A})\leq\mathbb{1}$, we further define an asterate of $\bm{A}$ to be the matrix
$$
\bm{A}^{\ast}
=
\bm{I}\oplus\bm{A}\oplus\cdots\oplus\bm{A}^{n-1}.
$$

A direct solution to inequality \eqref{I-Axx} is given by the next result.

\begin{theorem}\label{T-I-Axx}
Let $\bm{x}$ be the general regular solution of inequality \eqref{I-Axx} with an irreducible matrix $\bm{A}$. Then the following statements hold:
\begin{enumerate}
\item If $\mathop\mathrm{Tr}(\bm{A})\leq\mathbb{1}$, then $\bm{x}=\bm{A}^{\ast}\bm{u}$ for all regular vectors $\bm{u}$.
\item If $\mathop\mathrm{Tr}(\bm{A})>\mathbb{1}$, then there is no regular solution.
\end{enumerate}
\end{theorem}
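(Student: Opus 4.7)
\medskip
\noindent\textbf{Proof proposal.}
My plan is to handle the two statements in the reverse order listed, because the necessity part (2) motivates the key condition, and then to address (1) by splitting it into ``sufficiency'' (that $\bm{A}^{\ast}\bm{u}$ is always a solution under $\mathop{\mathrm{Tr}}(\bm{A})\leq\mathbb{1}$) and ``necessity'' (that every regular solution is of this form).

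First I would prove (2) by iterating. If $\bm{x}$ is any regular solution of $\bm{A}\bm{x}\leq\bm{x}$, isotonicity gives $\bm{A}^{k}\bm{x}\leq\bm{x}$ for every $k\geq1$. Reading the $i$-th component of this vector inequality and keeping only the $j=i$ term on the left yields $(\bm{A}^{k})_{ii}\, x_{i}\leq x_{i}$; since $\bm{x}$ is regular, $x_{i}$ is invertible and we conclude $(\bm{A}^{k})_{ii}\leq\mathbb{1}$. Summing over $i$ gives $\mathop{\mathrm{tr}}\bm{A}^{k}\leq\mathbb{1}$, and aggregating over $k=1,\ldots,n$ yields $\mathop{\mathrm{Tr}}(\bm{A})\leq\mathbb{1}$. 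Contrapositively, $\mathop{\mathrm{Tr}}(\bm{A})>\mathbb{1}$ rules out any regular solution, proving (2).

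For the sufficiency part of (1), the goal is to show $\bm{A}\bm{A}^{\ast}\leq\bm{A}^{\ast}$, since then multiplying on the right by any regular $\bm{u}$ shows that $\bm{x}=\bm{A}^{\ast}\bm{u}$ satisfies $\bm{A}\bm{x}\leq\bm{x}$ (and $\bm{x}$ is regular because $\bm{A}^{\ast}\geq\bm{I}$ together with irreducibility makes $\bm{A}^{\ast}$ have no zero entries, as was noted in the preliminaries). Expanding, this reduces to $\bm{A}^{n}\leq\bm{I}\oplus\bm{A}\oplus\cdots\oplus\bm{A}^{n-1}$. This is the step I expect to be the crux of the argument. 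I would unfold $(\bm{A}^{n})_{ij}$ as the tropical sum over all paths $i=i_{0},i_{1},\ldots,i_{n}=j$ of weight $a_{i_{0}i_{1}}\cdots a_{i_{n-1}i_{n}}$; since such a path visits $n{+}1$ indices in a set of size $n$, it must contain a repeated vertex and therefore decomposes into a shorter walk from $i$ to $j$ of length $\ell<n$ together with one or more disjoint cycles. Each such cycle of length $r\leq n$ contributes a weight that appears as a diagonal entry of $\bm{A}^{r}$, hence is bounded by $\mathop{\mathrm{tr}}\bm{A}^{r}\leq\mathop{\mathrm{Tr}}(\bm{A})\leq\mathbb{1}$; multiplying through, the original path weight is dominated by the weight of the residual walk, which in turn is bounded by $(\bm{A}^{\ell})_{ij}$. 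Taking the tropical sum over all paths then yields the desired entry-wise inequality.

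Finally, for necessity in (1), I would iterate the inequality again: any regular solution satisfies $\bm{A}^{k}\bm{x}\leq\bm{x}$ for all $k\geq0$, so summing from $k=0$ to $n-1$ gives $\bm{A}^{\ast}\bm{x}\leq\bm{x}$. The reverse inequality $\bm{x}\leq\bm{A}^{\ast}\bm{x}$ holds trivially from $\bm{I}\leq\bm{A}^{\ast}$ and the extremal property of addition. Hence $\bm{x}=\bm{A}^{\ast}\bm{x}$, and the representation $\bm{x}=\bm{A}^{\ast}\bm{u}$ holds with $\bm{u}=\bm{x}$, completing the proof. The combinatorial path-plus-cycle decomposition described in the previous paragraph is the only step that is not mechanical; everything else is direct manipulation using isotonicity, regularity, and the extremal property of idempotent addition.
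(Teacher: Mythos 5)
Your argument is correct and complete, but note that the paper itself offers no proof of this statement: Theorem~\ref{T-I-Axx} is quoted as a preliminary result from the cited references \cite{Krivulin2006Solution,Krivulin2009Methods}, so there is nothing in the text to match against. Your route is essentially the standard closure argument used in those sources. Part (2) via $\bm{A}^{k}\bm{x}\leq\bm{x}$ and cancellation of the invertible component $x_{i}$ is sound; the crux you identified, $\bm{A}^{n}\leq\bm{A}^{\ast}$ under $\mathop\mathrm{Tr}(\bm{A})\leq\mathbb{1}$ via removing cycles (each cycle weight being a term of some $\mathop\mathrm{tr}\bm{A}^{r}$, $r\leq n$, hence at most $\mathbb{1}$) from any walk of length $n$, is exactly the right combinatorial step and gives $\bm{A}\bm{A}^{\ast}\leq\bm{A}^{\ast}$, hence sufficiency; and the fixed-point observation $\bm{A}^{\ast}\bm{x}\leq\bm{x}\leq\bm{A}^{\ast}\bm{x}$, so $\bm{x}=\bm{A}^{\ast}\bm{x}$, settles necessity with $\bm{u}=\bm{x}$. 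Two small remarks: irreducibility is never actually used in your proof (regularity of $\bm{A}^{\ast}\bm{u}$ already follows from $\bm{A}^{\ast}\geq\bm{I}$ and $\bm{u}$ regular), so your argument proves the statement in slightly greater generality than stated --- irreducibility matters in the paper mainly to guarantee that $\bm{A}^{\ast}$ has no zero entries, which is exploited later in the constrained problems; and in part (2) you should say explicitly that $(\bm{A}^{k})_{ii}x_{i}\leq x_{i}$ is obtained by bounding the idempotent sum $\bigoplus_{j}(\bm{A}^{k})_{ij}x_{j}$ from below by its diagonal term, which you do, so no gap remains.
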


\section{Optimization problem}\label{S-Op}

We are now in a position to present the main result, which offers a solution to the following tropical optimization problem. Given matrices $\bm{A}\in\mathbb{X}^{m\times n}$, $\bm{B}\in\mathbb{X}^{l\times n}$ and vectors $\bm{p}\in\mathbb{X}^{m}$ and $\bm{q}\in\mathbb{X}^{l}$, find regular solutions $\bm{x}\in\mathbb{X}^{n}$ that 
\begin{equation}
\begin{aligned}
&
\text{maximize}
&&
\bm{q}^{-}\bm{B}\bm{x}(\bm{A}\bm{x})^{-}\bm{p}.
\end{aligned}
\label{P-maxqBxAxp}
\end{equation}

Below a solution to the problem is obtained under fairly general assumptions. Furthermore, a solution is given to a particular case of the problem. An extension of the solution to handle constrained problems is also discussed.

\subsection{The main result}

The next statement offers a direct complete solution to problem \eqref{P-maxqBxAxp}. 

\begin{theorem}\label{T-maxqBxAxp}
Suppose $\bm{A}$ is a matrix with regular columns, $\bm{B}$ is a column regular matrix, $\bm{p}$ and $\bm{q}$ are regular vectors. Define a scalar
\begin{equation}
\Delta
=
\bm{q}^{-}\bm{B}\bm{A}^{-}\bm{p}.
\label{E-DeltaqBAp}
\end{equation}

Then the maximum in problem \eqref{P-maxqBxAxp} is equal to $\Delta$, and attained if and only if the vector $\bm{x}=(x_{i})$ has components
\begin{equation}
\begin{aligned}
x_{k}
&=
\alpha\bm{a}_{k}^{-}\bm{p},
\\
x_{j}
&\leq
\alpha a_{sj}^{-1}p_{s},
\quad
j\ne k,
\end{aligned}
\label{I-xkalphaakp}
\end{equation}
for all $\alpha>\mathbb{0}$ and indices $k$ and $s$ given by
$$
k
=
\arg\max_{1\leq i\leq n}\bm{q}^{-}\bm{b}_{i}\bm{a}_{i}^{-}\bm{p},
\qquad
s
=
\arg\max_{1\leq i\leq m}a_{ik}^{-1}p_{i}.
$$
\end{theorem}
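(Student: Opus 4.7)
The plan is to first produce an upper bound $\Delta$ on the objective and then describe the regular vectors that achieve it. For any regular $\bm{x}$ and any index $i$, the componentwise inequality $\bm{A}\bm{x}\geq x_{i}\bm{a}_{i}$ gives, after order-reversing conjugation, $(\bm{A}\bm{x})^{-}\leq x_{i}^{-1}\bm{a}_{i}^{-}$, and right-multiplication by $\bm{p}$ yields $x_{i}(\bm{A}\bm{x})^{-}\bm{p}\leq\bm{a}_{i}^{-}\bm{p}$. Multiplying each of these bounds by the scalar $\bm{q}^{-}\bm{b}_{i}$ and forming the idempotent sum over $i$ collapses to
$$
\bm{q}^{-}\bm{B}\bm{x}(\bm{A}\bm{x})^{-}\bm{p}
=
\bigoplus_{i=1}^{n}(\bm{q}^{-}\bm{b}_{i})\,x_{i}(\bm{A}\bm{x})^{-}\bm{p}
\leq
\bigoplus_{i=1}^{n}(\bm{q}^{-}\bm{b}_{i})\bm{a}_{i}^{-}\bm{p}
=
\Delta,
$$
which establishes the announced upper bound; regularity of the columns of $\bm{A}$, of $\bm{p}$, of $\bm{q}$, and column regularity of $\bm{B}$ keep every conjugate well defined and $\Delta$ bounded away from $\mathbb{0}$.

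For necessity, I would track where equality is forced in the previous chain. Since the outer tropical sum equals $\Delta$, some index $k$ must simultaneously satisfy $\bm{q}^{-}\bm{b}_{k}\bm{a}_{k}^{-}\bm{p}=\Delta$ (so $k$ realises the argmax in the theorem) and $x_{k}(\bm{A}\bm{x})^{-}\bm{p}=\bm{a}_{k}^{-}\bm{p}$. Setting $\alpha=((\bm{A}\bm{x})^{-}\bm{p})^{-1}$ rearranges the second equation to $x_{k}=\alpha\bm{a}_{k}^{-}\bm{p}$, which is the first line of \eqref{I-xkalphaakp}. The condition $(\bm{A}\bm{x})^{-}\bm{p}=\alpha^{-1}$ is equivalent to $(\bm{A}\bm{x})_{i}\geq\alpha p_{i}$ for every $i$ with equality attained somewhere. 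The already-fixed term $a_{ik}x_{k}=\alpha a_{ik}\bm{a}_{k}^{-}\bm{p}$ itself meets this lower bound with equality precisely when $i$ is an argmax of $a_{jk}^{-1}p_{j}$, i.e.\ when $i=s$; for other rows there is strict slack, so equality must materialise at $i=s$, yielding $(\bm{A}\bm{x})_{s}=\alpha p_{s}$.

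The final step applies Lemma~\ref{L-axd} to the row-$s$ equation $\bigoplus_{j=1}^{n}a_{sj}x_{j}=\alpha p_{s}$, with the $k$-th component already at its upper bound $\alpha a_{sk}^{-1}p_{s}=\alpha\bm{a}_{k}^{-}\bm{p}$; the lemma then forces the remaining bounds $x_{j}\leq\alpha a_{sj}^{-1}p_{s}$ for $j\neq k$, reproducing the second line of \eqref{I-xkalphaakp}. Sufficiency is then a direct verification: for $\bm{x}$ of this form one checks $(\bm{A}\bm{x})^{-}\bm{p}=\alpha^{-1}$ and $\bm{q}^{-}\bm{B}\bm{x}\geq(\bm{q}^{-}\bm{b}_{k})x_{k}=\alpha\Delta$, which together with the general upper bound pins the objective at $\Delta$. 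I expect the most delicate step to be the equality analysis in the second paragraph: extracting the argmax indices $k$ and $s$ from the collapse of tropical sums to their maxima, and isolating the unique row in which $(\bm{A}\bm{x})_{i}=\alpha p_{i}$ must bind; once this is done, Lemma~\ref{L-axd} is precisely calibrated to deliver the displayed solution family.
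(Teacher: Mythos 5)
Your proof is correct, and it reaches the theorem by a noticeably different route than the paper. For the upper bound you work termwise, using $\bm{A}\bm{x}\geq x_{i}\bm{a}_{i}$ and antitonicity of conjugation to get $x_{i}(\bm{A}\bm{x})^{-}\bm{p}\leq\bm{a}_{i}^{-}\bm{p}$ and then summing; the paper instead invokes the identity $\bm{x}(\bm{A}\bm{x})^{-}=(\bm{A}\bm{x}\bm{x}^{-})^{-}\leq\bm{A}^{-}$ (via $\bm{x}\bm{x}^{-}\geq\bm{I}$) -- the two are equivalent in substance. The real divergence is in characterizing the maximizers: the paper exploits scale invariance to normalize $\alpha$ away, reduces attainment of $\Delta$ to the system $\bm{q}^{-}\bm{B}\bm{x}=\Delta$, $(\bm{A}\bm{x})^{-}\bm{p}=\mathbb{1}$ of \eqref{E-qBxDeltaAxp1}, solves the first equation completely with Lemma~\ref{L-axd} (a family indexed by $i$), and then tests each branch against the second equation, keeping $i=k$ and discarding $i\neq k$. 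You instead keep $\alpha=((\bm{A}\bm{x})^{-}\bm{p})^{-1}$ explicit and trace where equality is forced in your chain of bounds: tightness of the $k$-th term gives $x_{k}=\alpha\bm{a}_{k}^{-}\bm{p}$ with $k$ the argmax, and the observation that $(\bm{A}\bm{x})_{i}\geq a_{ik}x_{k}>\alpha p_{i}$ strictly off the argmax row pins the binding row at $i=s$, whence $x_{j}\leq\alpha a_{sj}^{-1}p_{s}$ for all $j$ (here Lemma~\ref{L-axd} is not even strictly needed -- each term of an idempotent sum is bounded by the sum). This buys you a slightly cleaner necessity argument: the paper derives the bounds $x_{j}\leq a_{sj}^{-1}p_{s}$ as a restriction imposed within the $i=k$ branch and is terse about why a vector in that branch violating them cannot satisfy the second equation, whereas your tightness analysis makes that necessity explicit; the paper's route, in exchange, displays the full solution family of the first equation and rules out the other branches directly. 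Your sufficiency check (row $s$ of $\bm{A}\bm{x}$ equals $\alpha p_{s}$, so $(\bm{A}\bm{x})^{-}\bm{p}=\alpha^{-1}$, while $\bm{q}^{-}\bm{B}\bm{x}\geq(\bm{q}^{-}\bm{b}_{k})x_{k}=\alpha\Delta$) matches the paper's verification. Note that you share the paper's implicit conventions: entries of $\bm{A}$ are treated as nonzero wherever they are inverted, and ties in the argmax defining $k$ and $s$ are handled loosely (the paper's exclusion of branches $i\neq k$ likewise assumes a strict maximum); neither point puts you below the paper's own level of rigor.
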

\begin{proof}
To verify the statement, we first show that \eqref{E-DeltaqBAp} is an upper bound for the objective function in problem \eqref{P-maxqBxAxp}. Then we validate that the regular vectors $\bm{x}$ defined as \eqref{I-xkalphaakp} yield the bound, whereas any other vector does not.

Obviously, if a vector $\bm{x}$ is a solution to \eqref{P-maxqBxAxp}, then any vector $\alpha\bm{x}$ for all $\alpha>\mathbb{0}$ is also a solution, and hence the solution to the problem is scale-invariant.

Since it holds $\bm{x}(\bm{A}\bm{x})^{-}=(\bm{A}\bm{x}\bm{x}^{-})^{-}\leq\bm{A}^{-}$ provided that both $\bm{x}$ and $\bm{A}$ have no zero elements, we immediately obtain
$$
\bm{q}^{-}\bm{B}\bm{x}(\bm{A}\bm{x})^{-}\bm{p}
\leq
\bm{q}^{-}\bm{B}\bm{A}^{-}\bm{p}
=
\Delta.
$$

To find vectors that give the bound, we have to solve an equation 
$$
\bm{q}^{-}\bm{B}\bm{x}(\bm{A}\bm{x})^{-}\bm{p}
=
\Delta.
$$

With an auxiliary variable $\alpha>\mathbb{0}$, the equation is immediately transformed into a system of equations
\begin{align*}
\bm{q}^{-}\bm{B}\bm{x}
&=
\alpha\Delta,
\\
(\bm{A}\bm{x})^{-}\bm{p}
&=
\alpha^{-1}.
\end{align*}

Considering that the solution is scale-invariant, we eliminate $\alpha$ to get
\begin{equation}
\begin{aligned}
\bm{q}^{-}\bm{B}\bm{x}
&=
\Delta,
\\
(\bm{A}\bm{x})^{-}\bm{p}
&=
\mathbb{1}.
\end{aligned}
\label{E-qBxDeltaAxp1}
\end{equation}

Furthermore, we examine all solutions of the first equation at \eqref{E-qBxDeltaAxp1} to find those solutions that satisfy the second equation as well.

Due to Lemma~\ref{L-axd}, the solution of the first equation in the system is actually a family of solutions defined for each $i=1,\ldots,n$ as vectors with components
\begin{align*}
x_{i}
&=
(\bm{q}^{-}\bm{b}_{i})^{-1}\Delta,
\\
x_{j}
&\leq
(\bm{q}^{-}\bm{b}_{j})^{-1}\Delta,
\quad
j\ne i.
\end{align*}

We consider the upper bound $\Delta$ and put it into the form
$$
\Delta
=
\bm{q}^{-}\bm{B}\bm{A}^{-}\bm{p}
=
\bigoplus_{i=1}^{n}\bm{q}^{-}\bm{b}_{i}\bm{a}_{i}^{-}\bm{p}
=
\bm{q}^{-}\bm{b}_{k}\bm{a}_{k}^{-}\bm{p},
$$
where $k$ is the index of a maximum term $\bm{q}^{-}\bm{b}_{i}\bm{a}_{i}^{-}\bm{p}$ over all $i=1,\ldots,n$.

As the starting point to get a common solution to both equations \eqref{E-qBxDeltaAxp1}, we use the solution of the first equation for $i=k$, which is given by
\begin{align*}
x_{k}
&=
(\bm{q}^{-}\bm{b}_{k})^{-1}\Delta
=
\bm{a}_{k}^{-}\bm{p},
\\
x_{j}
&\leq
(\bm{q}^{-}\bm{b}_{j})^{-1}\Delta
=
(\bm{q}^{-}\bm{b}_{j})^{-1}\bm{q}^{-}\bm{b}_{k}\bm{a}_{k}^{-}\bm{p},
\quad
j\ne k.
\end{align*}

Now we examine the left hand side of the second equation at \eqref{E-qBxDeltaAxp1}. We express the vector $\bm{A}\bm{x}$ as a linear combination of columns in the matrix $\bm{A}$,
$$
\bm{A}\bm{x}
=
x_{1}\bm{a}_{1}\oplus\cdots\oplus x_{n}\bm{a}_{n}.
$$

Then we take $x_{k}=\bm{a}_{k}^{-}\bm{p}$, and consider the term $x_{k}\bm{a}_{k}=\bm{a}_{k}\bm{a}_{k}^{-}\bm{p}$. First we write
$$
\bm{a}_{k}^{-}\bm{p}
=
a_{1k}^{-1}p_{1}\oplus\cdots\oplus a_{lk}^{-1}p_{l}
=
a_{sk}^{-1}p_{s},
$$
where $s$ is the index of a maximum term $a_{ik}^{-1}p_{i}$ over all $i=1,\ldots,m$.

Since the vector $x_{k}\bm{a}_{k}=\bm{a}_{k}\bm{a}_{k}^{-}\bm{p}$ has components
\begin{align*}
x_{k}a_{sk}
&=
a_{sk}a_{sk}^{-1}p_{s}
=
p_{s},
\\
x_{k}a_{jk}
&=
a_{jk}a_{sk}^{-1}p_{s}
\geq
p_{j},
\quad
j\ne s,
\end{align*}
we arrive at a vector inequality $\bm{A}\bm{x}\geq x_{k}\bm{a}_{k}\geq\bm{p}$.

To satisfy the second equation at \eqref{E-qBxDeltaAxp1}, the vector inequality must hold as an equality for at least one component.

By taking $x_{j}$ for all $j\ne k$ to meet the condition $x_{j}\leq a_{sj}^{-1}p_{s}$, we get
\begin{align*}
a_{s1}x_{1}\oplus\cdots\oplus a_{sn}x_{n}
&=
p_{s},
\\
a_{i1}x_{1}\oplus\cdots\oplus a_{in}x_{n}
&\geq
p_{i},
\quad
i\ne s.
\end{align*}

With the inequality $a_{sj}^{-1}p_{s}\leq\bm{a}_{j}^{-}\bm{p}\leq(\bm{q}^{-}\bm{b}_{j})^{-1}\bm{q}^{-}\bm{b}_{k}\bm{a}_{k}^{-}\bm{p}$, we conclude that any vector $\bm{x}$ with components
\begin{align*}
x_{k}
&=
\bm{a}_{k}^{-}\bm{p},
\\
x_{j}
&\leq
a_{sj}^{-1}p_{s},
\quad
j\ne k,
\end{align*}
presents a common solution of both equations at \eqref{E-qBxDeltaAxp1}, and so a solution to \eqref{P-maxqBxAxp}. Taking into account that the solution is scale-invariant, we get \eqref{I-xkalphaakp}.

Finally, we show that the solutions to the first equation for each $i\ne k$ cannot satisfy the second equation. We assume that $\bm{q}^{-}\bm{b}_{i}\bm{a}_{i}^{-}\bm{p}<\bm{q}^{-}\bm{b}_{k}\bm{a}_{k}^{-}\bm{p}$, and consider the solution
\begin{align*}
x_{i}
&=
(\bm{q}^{-}\bm{b}_{i})^{-1}\bm{q}^{-}\bm{b}_{k}\bm{a}_{k}^{-}\bm{p},
\\
x_{j}
&\leq
(\bm{q}^{-}\bm{b}_{j})^{-1}\bm{q}^{-}\bm{b}_{k}\bm{a}_{k}^{-}\bm{p},
\quad
j\ne i.
\end{align*}

With the assumption, we have $(\bm{q}^{-}\bm{b}_{i})^{-1}\bm{q}^{-}\bm{b}_{k}\bm{a}_{k}^{-}\bm{p}>\bm{a}_{i}^{-}\bm{p}$, and therefore,
$$
x_{i}\bm{a}_{i}
=
\bm{a}_{i}(\bm{q}^{-}\bm{b}_{i})^{-1}\bm{q}^{-}\bm{b}_{k}\bm{a}_{k}^{-}\bm{p}
>
\bm{a}_{i}\bm{a}_{i}^{-}\bm{p}.
$$

Now we write $\bm{A}\bm{x}\geq x_{i}\bm{a}_{i}>\bm{a}_{i}\bm{a}_{i}^{-}\bm{p}$, which yields $(\bm{A}\bm{x})^{-}<(\bm{a}_{i}^{-}\bm{p})^{-1}\bm{a}_{i}^{-}$. Finally, we see that $(\bm{A}\bm{x})^{-}\bm{p}<(\bm{a}_{i}^{-}\bm{p})^{-1}\bm{a}_{i}^{-}\bm{p}=\mathbb{1}$, and thus the above solution fails to solve the entire problem.
\qed
\end{proof}

\subsection{Particular cases}

Now we present a particular case of the general problem, which involves idempotent analogues of the vector and matrix norms. Another particular case is considered in the next section in the context of solution of scheduling problems.

Let us assume that $\bm{p}=\bm{q}=\mathbb{1}$ and note that $\mathbb{1}^{T}\bm{a}_{i}=\|\bm{a}_{i}\|$ and $\bm{b}_{i}^{-}\mathbb{1}=\|\bm{b}_{i}^{-}\|$. Moreover, we have
$$
\mathbb{1}^{T}\bm{B}\bm{x}(\bm{A}\bm{x})^{-}\mathbb{1}
=
\|\bm{B}\bm{x}\|\|(\bm{A}\bm{x})^{-}\|,
\qquad
\mathbb{1}^{T}\bm{B}\bm{A}^{-}\mathbb{1}
=
\|\bm{B}\bm{A}^{-}\|.
$$

Under these assumptions, problem \eqref{P-maxqBxAxp} takes the form
\begin{equation}
\begin{aligned}
&
\text{maximize}
&&
\|\bm{B}\bm{x}\|\|(\bm{A}\bm{x})^{-}\|.
\end{aligned}
\label{P-max1BxAx1}
\end{equation}

It follows from Theorem~\ref{T-maxqBxAxp} that a solution to problem \eqref{P-max1BxAx1} goes as follows.

\begin{corollary}\label{C-max1BxAx1}
Suppose $\bm{A}$ is a matrix with regular columns and $\bm{B}$ is a column regular matrix. Define a scalar
\begin{equation*}
\Delta
=
\|\bm{B}\bm{A}^{-}\|.
\label{E-DeltaBA}
\end{equation*}

Then the maximum in problem \eqref{P-max1BxAx1} is equal to $\Delta$, and attained if and only if the vector $\bm{x}=(x_{i})$ has components
\begin{equation*}
\begin{aligned}
x_{k}
&=
\alpha\|\bm{a}_{k}^{-}\|,
\\
x_{j}
&\leq
\alpha a_{sj}^{-1},
\quad
j\ne k,
\end{aligned}
\label{I-xkalphabk}
\end{equation*}
for all $\alpha>\mathbb{0}$ and indices $k$ and $s$ given by
$$
k
=
\arg\max_{1\leq i\leq n}\|\bm{b}_{i}\|\|\bm{a}_{i}^{-}\|,
\qquad
s
=
\arg\max_{1\leq i\leq m}a_{ik}^{-1}.
$$
\end{corollary}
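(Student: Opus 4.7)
The plan is to obtain Corollary~\ref{C-max1BxAx1} as a direct specialization of Theorem~\ref{T-maxqBxAxp}, by setting $\bm{p}=\bm{q}=\mathbb{1}$ and rewriting everything in terms of the norm notation $\|\cdot\|$ introduced just before the corollary. The approach is entirely mechanical substitution: first I would verify that the hypotheses of the theorem are met under the new data, then translate the formula for $\Delta$, then translate the formulas for the indices $k$, $s$ and the components of $\bm{x}$.

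For the hypotheses, the all-ones vector $\mathbb{1}$ is regular by definition, so both the substitutions $\bm{p}=\mathbb{1}$ and $\bm{q}=\mathbb{1}$ are admissible under the premises of Theorem~\ref{T-maxqBxAxp}; the assumptions on $\bm{A}$ and $\bm{B}$ carry over unchanged. The key identity driving the reduction is $\mathbb{1}^{-}=\mathbb{1}^{T}$, which follows from $\mathbb{1}^{-1}=\mathbb{1}$ componentwise. Using this together with $\mathbb{1}^{T}\bm{u}=\|\bm{u}\|$ and $\bm{v}^{-}\mathbb{1}=\|\bm{v}^{-}\|$, the objective $\bm{q}^{-}\bm{B}\bm{x}(\bm{A}\bm{x})^{-}\bm{p}$ of \eqref{P-maxqBxAxp} factors as $\|\bm{B}\bm{x}\|\,\|(\bm{A}\bm{x})^{-}\|$, confirming that problem \eqref{P-max1BxAx1} is the specialization of \eqref{P-maxqBxAxp}. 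The same identities turn $\Delta=\bm{q}^{-}\bm{B}\bm{A}^{-}\bm{p}$ into $\|\bm{B}\bm{A}^{-}\|$, matching the scalar declared in the corollary.

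Finally I would translate the optimizing data. The $i$-th summand in the expression $\bm{q}^{-}\bm{B}\bm{A}^{-}\bm{p}=\bigoplus_{i}\bm{q}^{-}\bm{b}_{i}\bm{a}_{i}^{-}\bm{p}$ reduces to $\|\bm{b}_{i}\|\,\|\bm{a}_{i}^{-}\|$, so the index $k$ supplied by Theorem~\ref{T-maxqBxAxp} becomes $\arg\max_{i}\|\bm{b}_{i}\|\,\|\bm{a}_{i}^{-}\|$. Since $p_{i}=\mathbb{1}$ for every $i$, the index $s=\arg\max_{i}a_{ik}^{-1}p_{i}$ reduces to $\arg\max_{i}a_{ik}^{-1}$. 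The formulas \eqref{I-xkalphaakp} for the maximizers simplify in the same way: $x_{k}=\alpha\bm{a}_{k}^{-}\bm{p}$ becomes $x_{k}=\alpha\|\bm{a}_{k}^{-}\|$, and $x_{j}\leq\alpha a_{sj}^{-1}p_{s}$ becomes $x_{j}\leq\alpha a_{sj}^{-1}$, which are precisely the bounds claimed.

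There is no real obstacle to overcome; the only point requiring a little care is bookkeeping the two uses of the symbol $\mathbb{1}$ (scalar unit versus all-ones vector) so that the conjugate transpose, the norm, and the rows/columns of $\bm{A}$ and $\bm{B}$ line up in the right dimensions. Once this is done, the corollary follows from Theorem~\ref{T-maxqBxAxp} with no additional argument.
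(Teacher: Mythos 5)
Your proposal is correct and matches the paper's own derivation: the corollary is obtained there exactly by setting $\bm{p}=\bm{q}=\mathbb{1}$ in Theorem~\ref{T-maxqBxAxp} and using the identities $\mathbb{1}^{T}\bm{u}=\|\bm{u}\|$ and $\bm{v}^{-}\mathbb{1}=\|\bm{v}^{-}\|$ to rewrite the objective, the bound $\Delta$, and the indices $k$ and $s$. Your extra care about the regularity of $\mathbb{1}$ and the scalar-versus-vector use of the symbol $\mathbb{1}$ is sound bookkeeping but introduces nothing beyond the paper's argument.
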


\subsection{Extension to constrained problems}

The solution to problem \eqref{P-maxqBxAxp} can be well extended to cover certain constrained problems. Specifically, assume $\bm{C}\in\mathbb{X}^{n\times n}$ to be given and consider a problem
\begin{equation}
\begin{aligned}
&
\text{maximize}
&&
\bm{q}^{-}\bm{B}\bm{x}(\bm{A}\bm{x})^{-}\bm{p},
\\
&
\text{subject to}
&&
\bm{C}\bm{x}
\leq
\bm{x}.
\end{aligned}
\label{P-maxqBxAxpCxlex}
\end{equation}

By Theorem~\ref{T-I-Axx}, the inequality constraint in \eqref{P-maxqBxAxpCxlex} has regular solutions only when $\mathop\mathrm{Tr}(\bm{C})\leq\mathbb{1}$. Under this condition, the solution is given  by $\bm{x}=\bm{C}^{\ast}\bm{u}$ for all regular vectors $\bm{u}$, whereas the entire problem reduces to
\begin{equation*}
\begin{aligned}
&
\text{maximize}
&&
\bm{q}^{-}\bm{B}\bm{C}^{\ast}\bm{u}(\bm{A}\bm{C}^{\ast}\bm{u})^{-}\bm{p}.
\end{aligned}
\label{P-maxqBCuACup-ast}
\end{equation*}

The unconstrained problem admits an immediate solution based on Theorem~\ref{T-maxqBxAxp}, provided that the matrix $\bm{A}\bm{C}^{\ast}$ has only regular columns and the matrix $\bm{B}\bm{C}^{\ast}$ is column regular.

Since it holds that $\bm{C}^{\ast}\geq\bm{I}$, the condition is fulfilled when the matrix $\bm{A}$ has no zero entries and $\bm{B}$ is column regular. The assumption on $\bm{A}$, however, is not necessary to apply the theorem. Specifically, the condition is also satisfied if the matrix $\bm{A}$ is row regular, whereas $\bm{C}$ is irreducible. Indeed, in this case, the matrix $\bm{C}^{\ast}$ and, thus the matrix $\bm{A}\bm{C}^{\ast}$, have no zero entries.

It is clear that the condition for $\bm{A}$ to be row regular is necessary.

Note that the solution to the unconstrained problem is given by Theorem~\ref{T-maxqBxAxp} in terms of the auxiliary vector $\bm{u}$ and, therefore, needs to be translated into a solution with respect to $\bm{x}$ with the mapping $\bm{x}=\bm{C}^{\ast}\bm{u}$.

Examples of solutions to particular constrained problems drawn from project scheduling are given in the next section.

\section{Application to project scheduling}\label{S-Aps}

In this section, we revisit scheduling problems \eqref{P-maxyimaxyixjaijyi}, \eqref{P-maxximaxxixjcijxi}, and \eqref{P-maxyimaxyixjcijxixjaijyi} to reformulate and solve them as optimization problems in the tropical mathematics setting. To illustrate the results obtained, numerical examples are also given.

\subsection{Representation and solution of problems}

Taking into account that the representation of the problems in the ordinary notation involves maximum, addition, and additive inversion, we translate them into the language of the semifield $\mathbb{R}_{\max,+}$.

We start with problem \eqref{P-maxyimaxyixjaijyi}, which can be written in terms of $\mathbb{R}_{\max,+}$ in a scalar form as
\begin{equation*}
\begin{aligned}
&
\text{maximize}
&&
\left(\bigoplus_{i=1}^{n}y_{i}\right)
\left(\bigoplus_{i=1}^{n}y_{i}^{-1}\right),
\\
&
\text{subject to}
&&
\bigoplus_{j=1}^{n}a_{ij}x_{j}
=
y_{i},
\quad
i=1,\ldots,n.
\end{aligned}
\end{equation*}

We introduce a matrix $\bm{A}=(a_{ij})$ and vectors $\bm{x}=(x_{i})$ and $\bm{y}=(y_{i})$ to shift from the scalar representation to that in the matrix-vector notation
\begin{equation}
\begin{aligned}
&
\text{maximize}
&&
\|\bm{y}\|\|\bm{y}^{-}\|,
\\
&
\text{subject to}
&&
\bm{A}\bm{x}
=
\bm{y}.
\end{aligned}
\label{P-maxyyAxy}
\end{equation}

A complete solution to the problem is given as follows.

\begin{lemma}\label{L-maxyyAxy}
Suppose $\bm{A}$ is a matrix with regular columns. Define a scalar
\begin{equation*}
\Delta
=
\|\bm{A}\bm{A}^{-}\|.
\label{E-DeltaAA}
\end{equation*}

Then the maximum in problem \eqref{P-maxyyAxy} is equal to $\Delta$, and attained if and only if the vector $\bm{x}=(x_{i})$ has components
\begin{equation*}
\begin{aligned}
x_{k}
&=
\alpha\|\bm{a}_{k}^{-}\|,
\\
x_{j}
&\leq
\alpha a_{sj}^{-1},
\quad
j\ne k,
\end{aligned}
\label{I-xkalphaak}
\end{equation*}
for all $\alpha>\mathbb{0}$ and indices $k$ and $s$ given by
$$
k
=
\arg\max_{1\leq i\leq n}\|\bm{a}_{i}\|\|\bm{a}_{i}^{-}\|,
\qquad
s
=
\arg\max_{1\leq i\leq n}a_{ik}^{-1}.
$$
\end{lemma}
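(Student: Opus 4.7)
The plan is to eliminate the equality constraint by direct substitution and then recognize the resulting unconstrained problem as an instance already solved above. Since $\bm{y}=\bm{A}\bm{x}$ is prescribed componentwise by $\bm{x}$, the constraint introduces no restriction on $\bm{x}$ beyond the requirement (inherited from the regularity hypothesis on $\bm{A}$) that $\bm{y}$ be regular whenever $\bm{x}$ is regular. Substituting yields the unconstrained problem
$$
\text{maximize}\quad \|\bm{A}\bm{x}\|\,\|(\bm{A}\bm{x})^{-}\|,
$$
which is precisely problem \eqref{P-max1BxAx1} with $\bm{B}=\bm{A}$.

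The next step is to verify the hypotheses of Corollary~\ref{C-max1BxAx1} in this specialization. The matrix $\bm{A}$ has regular columns by assumption, so the same matrix, taken now also in the role of $\bm{B}$, is trivially column regular. Thus Corollary~\ref{C-max1BxAx1} applies, and it remains only to read off the conclusion with the substitution $\bm{B}=\bm{A}$: the upper bound becomes $\Delta=\|\bm{A}\bm{A}^{-}\|$; the maximizing index becomes
$$
k=\arg\max_{1\leq i\leq n}\|\bm{a}_{i}\|\,\|\bm{a}_{i}^{-}\|,
$$
and the index $s=\arg\max_{1\leq i\leq n}a_{ik}^{-1}$ is unchanged (with $m=n$ here since $\bm{A}$ is square-shaped in the scheduling context). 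The formulas describing the components of $\bm{x}$ in \eqref{I-xkalphaakp} specialize, after setting $\bm{p}=\mathbb{1}$, to those stated in the lemma, since $\bm{a}_{k}^{-}\bm{p}=\bm{a}_{k}^{-}\mathbb{1}=\|\bm{a}_{k}^{-}\|$ and $a_{sj}^{-1}p_{s}=a_{sj}^{-1}$.

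There is essentially no obstacle: the whole content is a clean reduction to Corollary~\ref{C-max1BxAx1}. The only minor point to be careful about is ensuring that the transition from the scheduling formulation \eqref{P-maxyimaxyixjaijyi} to its tropical counterpart \eqref{P-maxyyAxy} is faithful, i.e.\ that the maximum deviation $\max y_{i}-\min y_{i}$ in ordinary arithmetic translates, via the identification of $\mathbb{R}_{\max,+}$, into the product $\|\bm{y}\|\,\|\bm{y}^{-}\|$; this follows directly from the definition of the norms and conjugate transpose in the semifield. Once that identification is in place, the lemma is an immediate corollary of the main theorem.
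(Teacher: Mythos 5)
Your proposal is correct and follows exactly the paper's own argument: substitute $\bm{y}=\bm{A}\bm{x}$ to obtain an unconstrained problem of the form \eqref{P-max1BxAx1} with $\bm{B}=\bm{A}$, then apply Corollary~\ref{C-max1BxAx1} and read off the specialized formulas. The extra checks you spell out (column regularity of $\bm{A}$ in the role of $\bm{B}$, the specialization $\bm{p}=\bm{q}=\mathbb{1}$) are implicit in the paper's two-line proof and are handled correctly.
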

\begin{proof}
By substitution $\bm{y}=\bm{A}\bm{x}$, we get an unconstrained problem in the form of \eqref{P-max1BxAx1}. Application of Corollary~\ref{C-max1BxAx1} with $\bm{B}=\bm{A}$ completes the solution.
\qed
\end{proof}

Note that the solution is actually determined up to a nonzero factor, and so can serve as a basis for further optimization of a schedule under additional constraints, including due date and early start time constraints.

Furthermore, we examine problem \eqref{P-maxximaxxixjcijxi}. When expressed in terms of the operations in the semifield $\mathbb{R}_{\max,+}$, the problem becomes
\begin{equation*}
\begin{aligned}
&
\text{maximize}
&&
\left(\bigoplus_{i=1}^{n}x_{i}\right)
\left(\bigoplus_{i=1}^{n}x_{i}^{-1}\right),
\\
&
\text{subject to}
&&
\bigoplus_{j=1}^{n}c_{ij}x_{j}
\leq
x_{i},
\quad
i=1,\ldots,n.
\end{aligned}
\end{equation*}

With a matrix $\bm{C}=(c_{ij})$, we switch to matrix-vector notation and get
\begin{equation}
\begin{aligned}
&
\text{maximize}
&&
\|\bm{x}\|\|\bm{x}^{-}\|,
\\
&
\text{subject to}
&&
\bm{C}\bm{x}
\leq
\bm{x}.
\end{aligned}
\label{P-maxxxCxx}
\end{equation}

\begin{lemma}\label{L-maxxxCxx}
Suppose $\bm{C}$ is an irreducible matrix with $\mathop\mathrm{Tr}(\bm{C})\leq\mathbb{1}$. Define a scalar
\begin{equation*}
\Delta
=
\|\bm{C}^{\ast}(\bm{C}^{\ast})^{-}\|.
\end{equation*}

Then the maximum in problem \eqref{P-maxxxCxx} is equal to $\Delta$, and attained if and only if $\bm{x}=\bm{C}^{\ast}\bm{u}$, where $\bm{u}=(u_{i})$ is any vector with components
\begin{equation*}
\begin{aligned}
u_{k}
&=
\alpha\|(\bm{c}_{k}^{\ast})^{-}\|,
\\
u_{j}
&\leq
\alpha (c_{sj}^{\ast})^{-1},
\quad
j\ne k,
\end{aligned}
\label{I-ukalphack}
\end{equation*}
for all $\alpha>\mathbb{0}$ and indices $k$ and $s$ given by
$$
k
=
\arg\max_{1\leq i\leq n}\|\bm{c}_{i}^{\ast}\|\|(\bm{c}_{i}^{\ast})^{-}\|,
\qquad
s
=
\arg\max_{1\leq i\leq n}(c_{ik}^{\ast})^{-1}.
$$
\end{lemma}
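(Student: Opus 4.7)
The plan is to reduce this constrained problem to an unconstrained one of the form \eqref{P-max1BxAx1} by parametrizing the feasible set, and then invoke Corollary~\ref{C-max1BxAx1}. Since $\bm{C}$ is irreducible and $\mathop\mathrm{Tr}(\bm{C})\leq\mathbb{1}$, Theorem~\ref{T-I-Axx} tells me that every regular $\bm{x}$ with $\bm{C}\bm{x}\leq\bm{x}$ is of the form $\bm{x}=\bm{C}^{\ast}\bm{u}$ for some regular $\bm{u}$, and conversely any such $\bm{u}$ produces a feasible $\bm{x}$. So I would substitute and rewrite the objective as $\|\bm{C}^{\ast}\bm{u}\|\,\|(\bm{C}^{\ast}\bm{u})^{-}\|$, which is exactly problem \eqref{P-max1BxAx1} with both matrices equal to $\bm{C}^{\ast}$.

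Before applying the corollary I must check its hypotheses: I need $\bm{C}^{\ast}$ to have regular columns (it then is automatically column regular, so both conditions hold simultaneously). This is where irreducibility of $\bm{C}$ enters: as noted in Section~\ref{S-Pdr}, for an irreducible $\bm{C}\in\mathbb{X}^{n\times n}$ the matrix $\bm{I}\oplus\bm{C}\oplus\cdots\oplus\bm{C}^{n-1}=\bm{C}^{\ast}$ has no zero entries, so every column is regular. With this verified, Corollary~\ref{C-max1BxAx1} applied with $\bm{A}=\bm{B}=\bm{C}^{\ast}$ yields the maximum value $\Delta=\|\bm{C}^{\ast}(\bm{C}^{\ast})^{-}\|$, attained precisely at the $\bm{u}$'s described in the statement, with $k$ and $s$ the claimed argmax indices (here $k=\arg\max\|\bm{c}_{i}^{\ast}\|\,\|(\bm{c}_{i}^{\ast})^{-}\|$ because $\bm{B}=\bm{A}=\bm{C}^{\ast}$ forces $\|\bm{b}_{i}\|=\|\bm{c}_{i}^{\ast}\|$ and $\|\bm{a}_{i}^{-}\|=\|(\bm{c}_{i}^{\ast})^{-}\|$).

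Finally I would translate back: the optimal $\bm{x}$ is recovered by $\bm{x}=\bm{C}^{\ast}\bm{u}$ for the $\bm{u}$'s just described, which matches the lemma's conclusion. I do not expect any real obstacle here since the argument is essentially a specialization of the extension to constrained problems outlined at the end of Section~\ref{S-Op}; the only delicate point is the regularity check on $\bm{C}^{\ast}$, which is handled cleanly by the irreducibility hypothesis. No calculational grinding is needed beyond recording the identifications $\bm{A}=\bm{B}=\bm{C}^{\ast}$ and reading off the formulas from Corollary~\ref{C-max1BxAx1}.
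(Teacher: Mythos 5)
Your argument is correct and essentially the same as the paper's: the paper also invokes Theorem~\ref{T-I-Axx} to parametrize the feasible set as $\bm{x}=\bm{C}^{\ast}\bm{u}$ and then applies Lemma~\ref{L-maxyyAxy} with $\bm{A}=\bm{C}^{\ast}$ (which is just Corollary~\ref{C-max1BxAx1} with $\bm{B}=\bm{A}$). Your explicit check that irreducibility makes $\bm{C}^{\ast}$ entrywise nonzero, hence with regular columns, is a point the paper leaves implicit, but it is the same route.
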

\begin{proof}
It follows from Theorem~\ref{T-I-Axx} that each solution to the inequality constraint in \eqref{P-maxxxCxx} is given by $\bm{x}=\bm{C}^{\ast}\bm{u}$, where $\bm{u}$ is a regular vector. Taking the general solution instead of the inequality, we arrive at an optimization problem with respect to $\bm{u}$ in the form of \eqref{P-maxyyAxy} with $\bm{A}=\bm{C}^{\ast}$. After solution of the last problem according to Lemma \ref{L-maxyyAxy}, we arrive at the desired result.
\qed 
\end{proof}

Finally, in a similar way as above, problem \eqref{P-maxyimaxyixjcijxixjaijyi} is represented in the form
\begin{equation}
\begin{aligned}
&
\text{maximize}
&&
\|\bm{y}\|\|\bm{y}^{-}\|,
\\
&
\text{subject to}
&&
\bm{A}\bm{x}
=
\bm{y},
\\
&
&&
\bm{C}\bm{x}
\leq
\bm{x},
\end{aligned}
\label{P-maxyyCxxAxy}
\end{equation}
and then accepts a complete solution given by the next result.

\begin{lemma}\label{L-maxyyCxxAxy}
Suppose $\bm{A}$ is a row regular matrix and $\bm{C}$ a matrix with $\mathop\mathrm{Tr}(\bm{C})\leq\mathbb{1}$ such that all columns in the matrix $\bm{D}=\bm{A}\bm{C}^{\ast}$ are regular. Define a scalar
\begin{equation*}
\Delta
=
\|\bm{D}\bm{D}^{-}\|.
\end{equation*}

Then the maximum in problem \eqref{P-maxyyCxxAxy} is equal to $\Delta$, and attained if and only if $\bm{x}=\bm{C}^{\ast}\bm{u}$, where $\bm{u}=(u_{i})$ is any vector with components
\begin{equation*}
\begin{aligned}
u_{k}
&=
\alpha\|\bm{d}_{k}^{-}\|,
\\
u_{j}
&\leq
\alpha d_{sj}^{-1},
\quad
j\ne k,
\end{aligned}
\label{I-ukalphadk}
\end{equation*}
for all $\alpha>\mathbb{0}$ and indices $k$ and $s$ given by
$$
k
=
\arg\max_{1\leq i\leq n}\|\bm{d}_{i}\|\|\bm{d}_{i}^{-}\|,
\qquad
s
=
\arg\max_{1\leq i\leq n}d_{ik}^{-1}.
$$
\end{lemma}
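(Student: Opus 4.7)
The plan is to mirror the strategy used for Lemma~\ref{L-maxxxCxx}, reducing the constrained problem \eqref{P-maxyyCxxAxy} to an already solved unconstrained one. First I would dispose of the inequality constraint with the aid of Theorem~\ref{T-I-Axx}: under the hypothesis $\mathop\mathrm{Tr}(\bm{C})\leq\mathbb{1}$, every regular solution of $\bm{C}\bm{x}\leq\bm{x}$ has the form $\bm{x}=\bm{C}^{\ast}\bm{u}$ for some regular vector $\bm{u}$, and conversely every such $\bm{u}$ yields a regular feasible $\bm{x}$, since $\bm{C}^{\ast}\geq\bm{I}$. This gives a parametrization of the feasible set that is a surjection onto the regular solutions of the inequality.

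Next I would eliminate the equality by substitution. Writing $\bm{y}=\bm{A}\bm{x}=\bm{A}\bm{C}^{\ast}\bm{u}=\bm{D}\bm{u}$, the objective $\|\bm{y}\|\|\bm{y}^{-}\|$ becomes $\|\bm{D}\bm{u}\|\|(\bm{D}\bm{u})^{-}\|$, and the problem reduces to the unconstrained task of maximizing this quantity over regular $\bm{u}$. This is exactly an instance of problem \eqref{P-maxyyAxy} with $\bm{D}$ playing the role of $\bm{A}$.

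The final step is to invoke Lemma~\ref{L-maxyyAxy} with the matrix $\bm{D}$. The hypothesis that every column of $\bm{D}$ is regular — which is explicitly part of the statement — is exactly what that lemma requires, and it supplies both the optimal value $\Delta=\|\bm{D}\bm{D}^{-}\|$ and the description of all optimizers in terms of the indices $k$ and $s$ appearing in the claim. Mapping back through $\bm{x}=\bm{C}^{\ast}\bm{u}$ then gives the stated solution. I do not anticipate any substantial obstacle, since all nontrivial work has already been absorbed into Theorem~\ref{T-I-Axx} and Lemma~\ref{L-maxyyAxy}; the only thing worth checking carefully is that the change of variables is a bijection between the feasible set of the constrained problem and the regular $\bm{u}$, so that the two maxima coincide and no optimizer is lost in the reduction.
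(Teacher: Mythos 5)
Your proposal follows exactly the route the paper intends (and uses explicitly for Lemma~\ref{L-maxxxCxx}): parametrize the constraint via Theorem~\ref{T-I-Axx} as $\bm{x}=\bm{C}^{\ast}\bm{u}$, substitute to get problem \eqref{P-maxyyAxy} with $\bm{D}=\bm{A}\bm{C}^{\ast}$ in place of $\bm{A}$, and apply Lemma~\ref{L-maxyyAxy}. This is correct; the only minor remark is that a surjection of the parametrization onto the regular feasible set (not a bijection) is all that is needed for the maxima and the optimizer description in terms of $\bm{u}$ to carry over.
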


Note that the matrix $\bm{D}=\bm{A}\bm{C}^{\ast}$ has only regular columns when all columns in the matrix $\bm{A}$ are regular or the matrix $\bm{C}$ is irreducible.

\subsection{Numerical examples}

We start with problem \eqref{P-maxyyAxy}, which is to maximize the deviation of completion time. Consider a project with $n=3$ activities operating under start-finish constraints given by a matrix 
$$
\bm{A}
=
\left(
\begin{array}{ccc}
4 & 1 & 1
\\
2 & 2 & 0
\\
0 & 1 & 3
\end{array}
\right).
$$

To apply Lemma~\ref{L-maxyyAxy}, we calculate
$$
\bm{A}^{-}
=
\left(
\begin{array}{rrr}
-4 & -2 & 0
\\
-1 & -2 & -1
\\
-1 & 0 & -3
\end{array}
\right),
\quad
\bm{A}\bm{A}^{-}
=
\left(
\begin{array}{ccc}
0 & 2 & 4
\\
1 & 0 & 2
\\
2 & 3 & 0
\end{array}
\right),
\quad
\Delta
=
\|\bm{A}\bm{A}^{-}\|
=
4.
$$

Furthermore, we get
$$
\|\bm{a}_{1}\|\|\bm{a}_{1}^{-}\|
=
4,
\qquad
\|\bm{a}_{2}\|\|\bm{a}_{2}^{-}\|
=
1,
\qquad
\|\bm{a}_{3}\|\|\bm{a}_{3}^{-}\|
=
3,
$$
and then verify that
$$
\|\bm{a}_{1}\|\|\bm{a}_{1}^{-}\|
=
\max\{\|\bm{a}_{i}\|\|\bm{a}_{i}^{-}\||\ i=1,2,3\},
\quad
a_{31}^{-1}
=
\max\{a_{i1}^{-1}|\ i=1,2,3\}.
$$

Taking $k=1$ and $s=3$, we assume $\alpha=0$ to obtain a solution set defined by the relations
$$
x_{1}
=
0,
\qquad
x_{2}
\leq
-1,
\qquad
x_{3}
\leq
-3.
$$

Specifically, the solution vector with the latest initiation time and its related vector of completion time are given by
$$
\bm{x}
=
\left(
\begin{array}{r}
0
\\
-1
\\
-3
\end{array}
\right),
\qquad
\bm{y}
=
\bm{A}\bm{x}
=
\left(
\begin{array}{c}
4
\\
2
\\
0
\end{array}
\right).
$$

To illustrate the solution to problem \eqref{P-maxxxCxx} given by Lemma~\ref{L-maxxxCxx}, we examine a project with start-start precedence constraints defined by a matrix
$$
\bm{C}
=
\left(
\begin{array}{rrr}
\mathbb{0} & -2 & 1
\\
0 & \mathbb{0} & 2
\\
-1 & \mathbb{0} & \mathbb{0}
\end{array}
\right),
$$
where the symbol $\mathbb{0}=-\infty$ is used to save space.

First we successively find
$$
\bm{C}^{2}
=
\left(
\begin{array}{rrr}
0 & \mathbb{0} & 0
\\
1 & -2 & 1
\\
\mathbb{0} & -3 & 0
\end{array}
\right),
\qquad
\bm{C}^{3}
=
\left(
\begin{array}{rrr}
-1 & -2 & 1
\\
0 & -1 & 2
\\
-1 & \mathbb{0} & -1
\end{array}
\right),
\qquad
\mathop\mathrm{Tr}(\bm{C})
=
0,
$$
and then form the matrices
$$
\bm{C}^{\ast}
=
\bm{I}\oplus\bm{C}\oplus\bm{C}^{2}
=
\left(
\begin{array}{rrr}
0 & -2 & 1
\\
1 & 0 & 2
\\
-1 & -3 & 0
\end{array}
\right),
\qquad
(\bm{C}^{\ast})^{-}
=
\left(
\begin{array}{rrr}
0 & -1 & 1
\\
2 & 0 & 3
\\
-1 & -2 & 0
\end{array}
\right).
$$

Furthermore, we calculate
$$
\bm{C}^{\ast}(\bm{C}^{\ast})^{-}
=
\left(
\begin{array}{rrr}
0 & -1 & 1
\\
2 & 0 & 3
\\
-1 & -2 & 0
\end{array}
\right),
\qquad
\Delta
=
\|\bm{C}^{\ast}(\bm{C}^{\ast})^{-}\|
=
3.
$$

We examine columns in the matrix $\bm{C}^{\ast}$ to get
$$
\|\bm{c}_{1}^{\ast}\|\|(\bm{c}_{1}^{\ast})^{-}\|
=
2,
\qquad
\|\bm{c}_{2}^{\ast}\|\|(\bm{c}_{2}^{\ast})^{-}\|
=
3,
\qquad
\|\bm{c}_{3}^{\ast}\|\|(\bm{c}_{3}^{\ast})^{-}\|
=
2.
$$

We take $k=2$ and then identify $s=3$. With $\alpha=0$, we arrive at a set of solutions $\bm{x}=\bm{C}^{\ast}\bm{u}$, where $\bm{u}=(u_{i})$ is a vector with components
$$
u_{1}
\leq
1,
\qquad
u_{2}
=
3,
\qquad
u_{3}
\leq
0.
$$

For the solution with the latest initiation time, we have
$$
\bm{u}
=
\left(
\begin{array}{c}
1
\\
3
\\
0
\end{array}
\right),
\qquad
\bm{x}
=
\bm{C}^{\ast}\bm{u}
=
\left(
\begin{array}{c}
1
\\
3
\\
0
\end{array}
\right).
$$

Now we apply Lemma~\ref{L-maxyyCxxAxy} to solve problem \eqref{P-maxyyCxxAxy}, which is to maximize deviation between completion times of activities in a project with a combined set of precedence constraints. We consider a project with $n=3$ activities, where start-finish and start-start constraints are given by respective matrices
$$
\bm{A}
=
\left(
\begin{array}{ccc}
4 & 1 & 1
\\
2 & 2 & 0
\\
0 & 1 & 3
\end{array}
\right),
\qquad
\bm{C}
=
\left(
\begin{array}{rrr}
\mathbb{0} & -2 & 1
\\
0 & \mathbb{0} & 2
\\
-1 & \mathbb{0} & \mathbb{0}
\end{array}
\right).
$$

Using the result of the previous example, we find the matrix
$$
\bm{D}
=
\bm{A}\bm{C}^{\ast}
=
\left(
\begin{array}{ccc}
4 & 1 & 1
\\
2 & 2 & 0
\\
0 & 1 & 3
\end{array}
\right)
\left(
\begin{array}{rrr}
0 & -2 & 1
\\
1 & 0 & 2
\\
-1 & -3 & 0
\end{array}
\right)
=
\left(
\begin{array}{rrr}
4 & 2 & 5
\\
3 & 2 & 4
\\
2 & 1 & 3
\end{array}
\right).
$$ 

Furthermore, we obtain
$$
\bm{D}^{-}
=
\left(
\begin{array}{rrr}
-4 & -3 & -2
\\
-2 & -2 & -1
\\
-5 & -4 & -3
\end{array}
\right),
\quad
\bm{D}\bm{D}^{-}
=
\left(
\begin{array}{rrr}
0 & 1 & 2
\\
0 & 0 & 1
\\
-1 & -1 & 0
\end{array}
\right),
\quad
\Delta
=
\|\bm{D}\bm{D}^{-}\|
=
2.
$$

Analysis of columns in the matrix $\bm{D}$ gives
$$
\|\bm{d}_{1}\|\|\bm{d}_{1}^{-}\|
=
2,
\qquad
\|\bm{d}_{2}\|\|\bm{d}_{2}^{-}\|
=
1,
\qquad
\|\bm{d}_{3}\|\|\bm{d}_{3}^{-}\|
=
2.
$$

First we take $k=1$ and $s=3$. With $\alpha=0$, we get a solution $\bm{x}=\bm{C}^{\ast}\bm{u}$, where the vector $\bm{u}=(u_{i})$ has components
$$
u_{1}
=
-2,
\qquad
u_{2}
\leq
-1,
\qquad
u_{3}
\leq
-3.
$$

The solution with the latest initiation times is given by
$$
\bm{u}
=
\left(
\begin{array}{c}
-2
\\
-1
\\
-3
\end{array}
\right),
\qquad
\bm{x}
=
\bm{C}^{\ast}\bm{u}
=
\left(
\begin{array}{c}
-2
\\
-1
\\
-3
\end{array}
\right),
\qquad
\bm{y}
=
\bm{D}\bm{u}
=
\left(
\begin{array}{c}
2
\\
1
\\
0
\end{array}
\right).
$$

Another solution is obtained by setting $k=3$ and $s=3$. The vector $\bm{u}$ is then defined by
$$
u_{1}
\leq
-2,
\qquad
u_{2}
\leq
-1,
\qquad
u_{3}
=
-3.
$$

The solution with latest initiation time is obviously the same as before.

\bibliographystyle{utphys}

\bibliography{A_maximization_problem_in_tropical_mathematics_a_complete_solution_and_application_examples}

\end{document}